\def\hurw{\mathop{\mbox{\textsl{H}}}\nolimits}
\def\ker{\mathop{\mbox{\textsl{Ker}}}\nolimits}
\def\ann{\mathop{\mbox{\textsl{Ann}}}\nolimits}
\def\exp{\mathop{\mbox{\textsf{exp}}}\nolimits}
\newcommand{\Opa}{\mathfrak{A}}
\newcommand{\N}{\mathbb{N}}
\newcommand{\R}{\mathbb{R}}
\newcommand{\n}{\textbf{n}}
\newcommand{\x}{\mathbf{x}}
\newcommand{\h}{\textbf{h}}
\DeclareMathOperator*{\bcast}{\scalerel*{\odot}{\sum}}
\DeclareMathOperator*{\bbox}{\scalerel*{\boxplus}{\sum}}
\newtheorem{theorem}{Theorem}
\newtheorem{lemma}{Lemma}
\newtheorem{definition}{Definition}
\newtheorem{proposition}{Proposition}
\newtheorem{example}{Example}
\begin{document}
\title{\textbf{Ring of Flows of $k$-dimensional Autonomous Dynamical Systems}}
\author{Ronald Orozco L\'opez}

\newcommand{\Addresses}{{

\textit{E-mail address}, R.~Orozco: \texttt{rj.orozco@uniandes.edu.co}
  
}}

\maketitle

\begin{abstract}
We construct a ring of flows where we can decompose autonomous nonlinear dynamical systems into smaller parts, then solve each part and finally put everything together to obtain the exact solution of these systems.
\end{abstract}
{\bf Keywords:} ring Hurwitz, phi-expansion ring, autonomous ring, flow ring\\
{\bf Mathematics Subject Classification:} 13f25, 34a34

\section{Introduction}

Un sistema dinámico es una terna $(T,X,\phi)$, donde $T$ es el conjunto de tiempos, $X\subseteq\R^{n}$ es el espacio de fases y $\phi$ es el mapa diferenciable $\phi:T\times X\rightarrow X$ solución de la ecuación diferencial $U^{\prime}=f(U)$, $f:D\subseteq\R^{n}\rightarrow\R^{n}$, y satisfaciendo las propiedades de grupo uniparámetrico
\begin{enumerate}
    \item $\phi_{0}(x)=x$,
    \item $\phi_{t}\circ\phi_{s}(x)=\phi_{t+s}(x)$.
\end{enumerate}
El mapa $\phi$ se llama flujo del sistema dinámico y cuando $T=\R$, el flujo será llamado completo o si $T=\R^{+}$, el flujo es no completo. Luego la familia de mapas $\{\phi_{t}:t\in\R\}$ es un grupo aditivo si el flujo es completo o un semigrupo en caso contrario. Para cada $x$ fija, $\phi_{t}(x)$ define una curva o trayectoria en $X$ cuando $t$ varía en $T$. Si la ecuación diferencial tiene la forma $U^{\prime}=AU$, con $A$ un matriz $n\times n$, entonces diremos que el sistema dinámico es lineal.

A un sistema dinámico lineal podemos aplicarle el principio de superposición, esto es, podemos descomponer el sistema lineal en partes. Entonces cada parte puede ser resuelta separadamente y luego todo ser recombinado para obtener la respuesta final. Esto no es cierto para sistemas no lineales, pues no existe un principio de superposición para este tipo de sistemas, ya que siempre que las partes de un sistema interfieren, cooperan o compiten, se producen interacciones no lineales. En este artículo construiremos un anillo en donde podamos descomponer un sistema no lineal en sistemas más pequeños para luego resolverlos y posteriormente juntar todo para obtener la solución final. El parámetro tiempo comumente es tomado del conjunto de los números reales, en este artículo dicho parámetro correrá sobre un dominio de integridad $R$ de caracterísica cero. Luego en futuros trabajos será posible usar toda la maquinaria del álgebra en el estudio de soluciones de sistemas dinámicos no lineales.

Este artículo está dividido de la siguiente forma. En la segunda sección introduciremos el anillo diferencial $\hurw_{R}[[x_{1},...,x_{k}]]$ de series de potencias en las variables $x_{1},...,x_{k}$. Seguido, en la tercera sección estudiamos el anillo de $k$ copias del anillo $\hurw_{R}[[x_{1},...,x_{k}]]$ y sobre este anillo es definida la matriz Jacobiana. En la cuarta sección por medio de usar derivadas direccionales son construidos los anillos expansión y autónomos. En la quinta sección es definido el flujo $k$-dimensional sobre dominios de integridad y es mostrado que es posible dotar con estructura de $R$-módulo al flujo de una sistema dinámico cuando este es definido sobre un dominio de integridad. Finalmente, construimos el anillo de funciones generadoras exponenciales de los anillos de la quinta sección. Sobre estos últimos anillos es donde estudiaremos la solución de un sistema dinámico. En particular se darán soluciones exactas de la ecuación de Lotka-Volterra y de la ecuación de Van der Pol.

\section{El anillo $(\hurw_{R}[[x_{1},...,x_{k}]],+,\cdot)$}

En todo este art\'iculo denote $(R,+,\cdot)$ un dominio de integridad de caracter\'istica $0$. Sea
$\N^{k}$ un conjunto de \'indices con orden lexicográfico y denote $\n=(n_{1},...,n_{k})$. Definimos los siguientes conjuntos:

\begin{equation}
\hurw_{R}=\{(a_{\n})_{\n\in\N^{k}}:a_{\n}\in R\}
\end{equation}
en donde los elementos de la sucesi\'on $(a_{\n})$ vienen dados con el orden de $\N^{k}$.

Sean $\textbf{a}=(a_{\n})_{\n\in\N^{k}}$ y $\textbf{b}=(b_{\n})_{\n\in\N^{k}}$ en $\hurw_{R}$. Defina 
la suma en $\hurw_{R}$ como $\textbf{a}+\textbf{b}=(a_{\n}+b_{\n})_{\n\in\N^{k}}$. Denote $\cdot$ 
el producto Hadamard de $\textbf{a}$ y $\textbf{b}$ dado por 
$\textbf{a}\cdot\textbf{b}=(a_{\n}\cdot b_{\n})_{\n\in\N^{k}}$, es decir, el producto en 
$\hurw_{R}$ es definido componente a componente. Luego $(H_{R},+,\cdot)$ es un anillo con elemento
unidad $\textbf{1}=(1,1,1,...)$. Un elemento $\textbf{a}$ es invertible en $\hurw_{R}$ con respecto
al producto $\cdot$ si y s\'olo si $a_{\n}\in R^{*}$ para todo $\n\in\N^{k}$.

Sea $\textbf{h}=(h_{1},...,h_{k})\in\N^{k}$. Ahora denote $*$ el $k$-producto Hurwitz en $\hurw_{R}$ como
\begin{equation}\label{eqn_k_prod_h}
\textbf{a}\ast\textbf{b}=\left(\sum_{\textbf{h}=\textbf{0}}^{\n}\prod_{j=1}^{k}\binom{h_{j}}{n_{j}}a_{\textbf{h}}b_{\textbf{n}-\textbf{h}}\right)_{\textbf{n}\in\N^{k}}.
\end{equation}

Entonces $(\hurw_{R},+,\ast)$ es un anillo con elemento unidad $\textbf{e}=(1,0,0,...)$. Un elemento
$\textbf{a}=(a_{\n})_{\n\in\N^{k}}$ en $\hurw_{R}$ es invertible con respecto a $\ast$ si y s\'olo 
$a_{0}\in R^{*}$. Denote 
$$\hurw_{R}^{*}=\{\textbf{a}^{-1}:\textbf{a}\in\hurw_{R}\}$$ 
el conjunto de elementos invertibles en $(\hurw_{R},+,*)$, donde 
$\textbf{a}^{-1}=\textbf{b}=(b_{\n})_{\n\in\N}$. Entonces

\begin{proposition}
Denote $\vert\h\vert=n_{1}+n_{2}+\cdots+n_{k}$ y $\textbf{0}=(0,...,0)$. Si $\textbf{b}=\textbf{a}^{-1}$, entonces
\begin{eqnarray}
b_{\textbf{0}}&=&a_{\textbf{0}}^{-1}\\
b_{\n}&=&-a_{\textbf{0}}^{-1}\sum_{\vert\h\vert=1}^{\n}\prod_{j=1}^{k}\binom{n_{j}}{h_{j}}a_{\h}b_{\n-\h},\ \ \n\in\N^{k}.
\end{eqnarray}
\end{proposition}
\begin{proof}
Se debe tener $\textbf{a}\ast\textbf{b}=\textbf{e}$. Luego
\begin{equation*}
\sum_{\textbf{h}=\textbf{0}}^{\n}\prod_{j=1}^{k}\binom{h_{j}}{n_{j}}a_{\textbf{h}}b_{\textbf{n}-\textbf{h}}=
\begin{cases}
1 & \text{si }\n=\textbf{0}\\
0 & \text{si }\n\neq\textbf{0}
\end{cases}
\end{equation*}
El resto de la prueba sigue de obtener $b_{\n}$.
\end{proof}

Sea $\x=x_{1}\cdots x_{k}$ un monomial en las indeterminadas $x_{1}$,...,$x_{k}$ y denote 
$\x^{n}=x_{1}^{n_{1}}\cdots x_{k}^{n_{k}}$, 
$\n!=n_{1}!\cdots n_{k}!$. Denote $\hurw_{R}[[\x]]$ el conjunto de series de potencias formales
de la forma $\sum_{\n\in\N^{k}}a_{\n}\frac{\x^{\n}}{\n!}$ con coeficientes en $R$. 
Es claro que $(\hurw_{R}[[\x]],+,\cdot)$ es un anillo con adici\'on y producto de series ordinaria

$$f(\x)+g(\x)=\sum_{\n\in\N^{k}}(a_{\n}+b_{\n})\frac{\x^{\n}}{\n!},$$  
$$f(\x)\cdot g(\x)=\sum_{\n\in\N^{k}}\left(\sum_{\textbf{h}=\textbf{0}}^{\n}\prod_{j=1}^{k}\binom{h_{j}}{n_{j}}a_{\textbf{h}}b_{\textbf{n}-\textbf{h}}\right)\frac{\x^{\n}}{\n!},$$ 

con $f(\x)=\sum_{\n\in\N^{k}}a_{\n}\frac{\x^{\n}}{\n!}$, 
$g(x)=\sum_{\n\in\N^{k}}b_{\n}\frac{\x^{\n}}{\n!}\in\hurw_{R}[[\x]]$. El anillo $\hurw_{R}[[\x]]$ 
ser\'a llamado anillo de Hurwitz de series de potencias multivariadas. Para el caso univariado 
(ve\'ase [4]).

Por otro lado, sea $\rho_{\x}:(\hurw_{R},+,*)\rightarrow(\hurw_{R}[[\x]],+,\cdot)$ un isomorfismo
dado por 
$$\rho_{\x}(\textbf{a})=\rho_{\x}((a_{\n})_{\n\in\N^{k}})=\sum_{\n\in\N^{k}}a_{\n}\frac{\x^{\n}}{\n!}=f(\x).$$

Si $\rho_{\x}(\textbf{a})=f(\x)$ y $\rho_{\x}(\textbf{b})=g(\x)$, entonces

$$\rho_{\x}(\textbf{a}\ast\textbf{b})=\rho_{\x}(\textbf{a})\cdot\rho_{\x}(\textbf{b})$$
y
$$\rho_{\x}(\textbf{a}^{-1})=\frac{1}{\rho_{\x}(\textbf{a})}=(\rho_{\x}(\textbf{a}))^{-1}$$

Denote $\delta_{l}$ un operador $l$-shift sobre $\hurw_{R}$, $1\leq l\leq k,$ definido por 

$$\delta_{l}((a_{\n})_{\n\in\N^{k}})=(a_{\n+e_{l}})_{n\in\N^{k}}$$

donde $e_{l}$ es el vector con 1 en la componente $l$ y 0 en el resto. Tenemos entonces

\begin{proposition}
Los operadores $l$-shift $\delta_{l}$ son derivaciones sobre $\hurw_{R}$, esto es
\begin{enumerate}
\item $\delta_{l}(\textbf{a}+\textbf{b})=\delta_{l}(\textbf{a})+\delta_{l}(\textbf{b})$.
\item $\delta_{l}(\textbf{a}\ast\textbf{b})=\delta_{l}(\textbf{a})\ast\textbf{b}+\textbf{a}\ast\delta_{l}(\textbf{b}).$
\item $\delta_{i}\delta_{j}=\delta_{j}\delta_{i}$, $i\neq j$.
\end{enumerate}
\end{proposition}
\begin{proof}
La prueba de 1 y 3 son directas. Solo mostramos la prueba de 2. Tenemos
\begin{eqnarray*}
\delta_{l}(\textbf{a}\ast\textbf{b})&=&\sum_{\textbf{h}=\textbf{0}}^{\n+e_{l}}\left[\binom{n_{1}}{h_{1}}\cdots\binom{n_{l}+1}{h_{l}}\cdots\binom{n_{k}}{h_{k}}\right]a_{\h}b_{\n+e_{l}-\h}\\
&=&\sum_{\textbf{h}=\textbf{0}}^{\n+e_{l}}\left[\binom{n_{1}}{h_{1}}\cdots\left(\binom{n_{l}}{h_{l}-1}+\binom{n_{l}}{h_{l}}\right)\cdots\binom{n_{k}}{h_{k}}\right]a_{\h}b_{\n+e_{l}-\h}\\
&=&\sum_{\textbf{h}=\textbf{0}}^{\n+e_{l}}\left[\binom{n_{1}}{h_{1}}\cdots\binom{n_{l}}{h_{l}-1}\cdots\binom{n_{k}}{h_{k}}\right]a_{\h}b_{\n+e_{l}-\h}\\
&&+\sum_{\textbf{h}=\textbf{0}}^{\n+e_{l}}\left[\binom{n_{1}}{h_{1}}\cdots\binom{n_{l}}{h_{l}}\cdots\binom{n_{k}}{h_{k}}\right]a_{\h}b_{\n+e_{l}-\h}
\end{eqnarray*}
En la primera sumatoria $h_{l}$ var\'ia entre $0$ y $n_{l}+1$. Luego el coeficiente binomial
$\binom{n_{l}}{h_{l}-1}$ se cancela para $h_{l}=0$. As\'i podemos iniciar $h_{l}$ en 1 y la sumatoria queda desp\'ues de un cambio de variable
$$\sum_{\textbf{h}=\textbf{0}}^{\n}\left[\binom{n_{1}}{h_{1}}\cdots\binom{n_{l}}{h_{l}}\cdots\binom{n_{k}}{h_{k}}\right]a_{\h+e_{l}}b_{\n-\h}=\delta_{l}(\textbf{a})\ast\textbf{b}.$$
De igual manera es mostrado que la segunda suma se puede escribir como
$$\sum_{\textbf{h}=\textbf{0}}^{\n}\left[\binom{n_{1}}{h_{1}}\cdots\binom{n_{l}}{h_{l}}\cdots\binom{n_{k}}{h_{k}}\right]a_{\h}b_{\n+e_{l}-\h}=\textbf{a}\ast\delta_{l}(\textbf{b}).$$
Sumando todo lo anterior obtenemos el resultado deseado.
\end{proof}
Denote $\Delta=\{\delta_{1},\delta_{2},...,\delta_{k}\}$ el conjunto de derivaciones en $\hurw_{R}$.
Entonces el anillo $\hurw_{R}$ es un anillo diferencial o $\Delta$-anillo. (Ve\'ase [?])

Ahora denote $\partial_{l}\equiv\frac{\partial}{\partial x_{l}}$ derivaciones parciales sobre 
$\hurw_{R}[[\x]]$ definidas por $\partial_{l}x_{l}^{n}=nx_{l}^{n-1}$ para $n\geq0$,
$\partial_{l}x_{i}=0$ para todo $l\neq i$ y

$$\partial_{l}\left(\sum_{\n\in\N^{k}}a_{\n}\frac{\x^{\n}}{\n!}\right)=\sum_{\n\in\N^{k}}a_{\n}\frac{\partial_{l}\x^{\n}}{\n!}=\sum_{\n\in\N^{k}}a_{\n+e_{l}}\frac{\x^{\n}}{\n!}.$$

Entonces con $\nabla=\{\partial_{1},\partial_{2},...,\partial_{k}\}$ el anillo $\hurw_{R}[[\x]]$
viene a ser un anillo diferencial o $\nabla$-anillo. Adem\'as

\begin{equation}\label{relation_delta_rho}
\partial_{l}\rho_{\x}(\textbf{a})=\rho_{\x}(\delta_{l}\textbf{a}),
\end{equation}

es decir, el siguiente diagrama conmuta

\begin{equation}
\xymatrix{
 \hurw_{R} \ar[d]^{\rho_{\x}} \ar[r]^{\delta_{l}} & \hurw_{R} \ar[d]^{\rho_{\x}}\\
 \hurw_{R}[[\x]] \ar[r]^{\partial_{l}} & \hurw_{R}[[\x]] 
}
\end{equation}
y el isomorfismo $\rho_{\x}$ es un isomorfismo diferencial.

Denote $C_{l}(\hurw_{R})$ el conjunto de constantes de la derivaci\'on $\delta_{l}$, esto es
\begin{equation}
C_{l}(\hurw_{R})=\{\textbf{a}\in\hurw_{R}:\delta_{l}(\textbf{a})=(0,0,...)\}
\end{equation}
y denote $C_{l}(\hurw_{R}[[\x]])$ el conjunto de constantes de la derivaci\'on $\partial_{l}$, 
esto es
\begin{equation}
C_{l}(\hurw_{R}[[\x]])=\{\rho_{\x}(\textbf{a})\in\hurw_{R}[[\x]]:\partial_{l}(\rho_{\x}(\textbf{a}))=0\}
\end{equation}

Entonces por (\ref{relation_delta_rho}), 
$$\partial_{l}\rho_{\x}(\textbf{a})=\rho_{\x}(\delta_{l}\textbf{a})=\rho_{\x}(0,0,...)=0$$
para todo $\textbf{a}\in C_{l}(\hurw_{R})$ y $\rho_{\x}(\textbf{a})$ es una constante para 
$\partial_{l}$. Es decir, $\rho_{\x}(\textbf{a})\in C_{l}(\hurw_{R}[[\x]])$ y
$$\rho_{\x}(C_{l}(\hurw_{R}))=C_{l}(\hurw_{R}[[\x]]).$$

Finalizamos mostrando los ideales en $\hurw_{R}[[\x]]$. Sea $I$ un ideal en $R$ y sea $\epsilon=(\epsilon_{1},\epsilon_{2},...,\epsilon_{k})$
con $\epsilon_{i}=0,1$ y denote $\x^{\epsilon}=x_{1}^{\epsilon_{1}}x_{2}^{\epsilon_{2}}\cdots x_{k}^{\epsilon_{k}}$ y suponga que $\x^{\epsilon}\neq1$. Entonces
\begin{equation}
I+\left\langle\x^{\epsilon}\right\rangle=\left\{\sum_{\n\in\N^{k}}a_{\n}\frac{\x^{\n}}{\n!}:a_{\textbf{0}}\in I\right\}
\end{equation}
es un ideal en $\hurw_{R}[[\x]]$. Otro ideal en $\hurw_{R}[[\x]]$ es de la forma
\begin{equation}
\hurw_{I}[[\x]]=\left\{\sum_{\n\in\N^{k}}a_{\n}\frac{\x^{\n}}{\n!}:a_{\textbf{n}}\in I\right\}
\end{equation}

\section{El anillo $\hurw_{R}^{k}[[x_{1},...,x_{k}]]$}

Denote $\hurw_{R}^{k}$ el producto directo de $k$ copias del anillo de Hurwitz $\hurw_{R}$ en donde
$$\mathfrak{a}+\mathfrak{b}=(\textbf{a}_{1},...,\textbf{a}_{k})+(\textbf{b}_{1},...,\textbf{b}_{k})=(\textbf{a}_{1}+\textbf{b}_{1},...,\textbf{a}_{k}+\textbf{b}_{k})$$
y
$$\mathfrak{a}\ast\mathfrak{b}=(\textbf{a}_{1},...,\textbf{a}_{k})\ast(\textbf{b}_{1},...,\textbf{b}_{k})=(\textbf{a}_{1}\ast\textbf{b}_{1},...,\textbf{a}_{k}\ast\textbf{b}_{k})$$
para todo $\mathfrak{a},\mathfrak{b}\in\hurw_{R}^{k}$. Ahora denote $\hurw_{R}^{k}[[\x]]$ el producto
directo de $k$ copias del anillo de series exponenciales multivariadas $\hurw_{R}[[\x]]$ en donde
\begin{eqnarray*}
F(\x)+G(\x)&=&(f_{1}(\x),...,f_{k}(\x))+(g_{1}(\x),...,g_{k}(\x))\\
&=&(f_{1}(\x)+g_{1}(\x),...,f_{k}(\x)+g_{k}(\x))
\end{eqnarray*}
y
\begin{eqnarray*}
F(\x)\cdot G(\x)&=&(f_{1}(\x),...,f_{k}(\x))\cdot(g_{1}(\x),...,g_{k}(\x))\\
&=&(f_{1}(\x)\cdot g_{1}(\x),...,f_{k}(\x)\cdot g_{k}(\x))
\end{eqnarray*}
para todo $F(\x),G(\x)\in\hurw_{R}^{k}[[\x]]$.

Es f\'acil notar que ambos anillos $\hurw_{R}^{k}$ y $\hurw_{R}^{k}[[\x]]$ son anillos con divisores
de cero. Ahora si definimos el mapa $\rho_{\x}$ sobre el anillo $\hurw_{R}^{k}$ como
$$\rho_{\x}(\mathfrak{a})=(\rho_{\x}(\textbf{a}_{1}),...,\rho_{\x}(\textbf{a}_{k}))$$
entonces $\rho_{\x}$ es un ismorfismo de $\hurw_{R}^{k}$ en $\hurw_{R}^{k}[[\x]]$ con
\begin{eqnarray*}
\rho_{\x}(\mathfrak{a}+\mathfrak{b})&=&\rho_{\x}(\mathfrak{a})+\rho_{\x}(\mathfrak{b})\\
\rho_{\x}(\mathfrak{a}\ast\mathfrak{b})&=&\rho_{\x}(\mathfrak{a})\cdot\rho_{\x}(\mathfrak{b})
\end{eqnarray*} 

El conjunto de todas las matrices $m\times n$ con entradas de $\hurw_{R}$ ser\'a denotado por
$M_{m\times n}(\hurw_{R})$. Con $\mathbf{a}_{i,j}$ denotaremos la entrada $i,j$-\'esima de la matriz
$\mathrm{A}=(\mathbf{a}_{i,j})_{i,j=1}^{m,n}$.

\begin{definition}
Para matrices $\mathrm{A}=(\mathbf{a}_{i,j})_{i,j=1}^{m,n},\mathrm{B}=(\mathbf{b}_{i,j})_{i,j=1}^{m,n}\in M_{m\times n}(\hurw_{R})$ definimos la suma $\mathrm{A}+\mathrm{B}$ en $M_{m\times n}(\hurw_{R})$ como
\begin{equation}
\mathrm{A}+\mathrm{B}=(\mathbf{a}_{i,j}+\mathbf{b}_{i,j})_{i,j=1}^{m,n}.
\end{equation}
Para matrices $\mathrm{A}=(\mathbf{a}_{i,j})_{i,j=1}^{m,n}\in M_{m\times n}(\hurw_{R})$ y $\mathrm{B}=(\mathbf{a}_{i,j})_{i,j=1}^{n,p}\in M_{n\times p}(\hurw_{R})$ definimos el producto $\mathrm{A}\mathrm{B}$ en $M_{m\times p}(\hurw_{R})$ como
\begin{equation}
\mathrm{A}\mathrm{B}=\left(\sum_{k=1}^{n}\textbf{a}_{ik}\ast\textbf{b}_{kj}\right)_{i,j=1}^{m,n}.
\end{equation}
La multiplicaci\'on escalar es definida como 
\begin{equation}
\textbf{r}\ast\mathrm{A}=(\textbf{r}\ast\mathbf{a}_{i,j})_{i,j=1}^{m,n}
\end{equation}
para todo $\mathbf{r}\in\hurw_{R}$ y toda matriz $\mathrm{A}\in M_{m\times n}(\hurw_{R})$. Por
\'ultimo, denote $\mathrm{row}_{i}(A)$ la $i$-\'esima fila de $\mathrm{A}\in M_{m\times n}(\hurw_{R})$. Entonces
\begin{equation}
\mathfrak{a}\ast\mathrm{A}=\mathrm{A}\ast\mathfrak{a}=(\mathbf{a}_{1}\ast\mathrm{row}_{1}(A),...,\mathbf{a}_{n}\ast\mathrm{row}_{n}(A))
\end{equation}
para $\mathfrak{a}=(\mathbf{a}_{1},...,\mathbf{a}_{n})\in\hurw_{R}^{n}$.
\end{definition}

Es muy f\'acil probar el siguiente resultado

\begin{proposition}
El conjunto de matrices $M_{m\times n}(\hurw_{R})$ es un $\hurw_{R}$-m\'odulo y el conjunto de matrices $M_{n\times n}(\hurw_{R})$ es un $\hurw_{R}$-\'algebra de matrices.
\end{proposition}

Si definimos el mapa $\rho_{\x}$ sobre el el conjunto de matrices $M_{m\times n}(\hurw_{R})$,
entonces 
\begin{equation}
\rho_{\x}[M_{m\times n}(\hurw_{R})]=M_{m\times n}(\rho_{\x}(\hurw_{R}))=M_{m\times n}(\hurw_{R}[[\x]])
\end{equation}
y
\begin{eqnarray}
\rho_{\x}(\mathrm{A}+\mathrm{B})&=&\rho_{\x}(\mathrm{A})+\rho_{\x}(\mathrm{B})\\
\rho_{\x}(\mathrm{A}\mathrm{B})&=&\rho_{\x}(\mathrm{A})\rho_{\x}(\mathrm{B})\\
\rho_{\x}(\mathbf{r}\ast\mathrm{A})&=&\rho_{\x}(\mathbf{r})\rho_{\x}(\mathrm{A})\\
\rho_{\x}(\mathfrak{a}\ast\mathrm{A})&=&\rho_{\x}(\mathfrak{a})\cdot\rho_{\x}(\mathrm{A})\nonumber\\
&=&(\rho_{\x}(\mathbf{a}_{1})\cdot\mathrm{row}(\rho_{\x}(A)),...,\rho_{\x}(\mathbf{a}_{n})\cdot\mathrm{row}(\rho_{\x}(A)))
\end{eqnarray}
y como claramente $M_{m\times n}(\hurw_{R}[[\x]])$ es un $\hurw_{R}[[\x]]$-m\'odulo y 
$M_{n\times n}(\hurw_{R}[[\x]])$ es un $\hurw_{R}[[\x]]$-\'algebra, entones $\rho_{\x}$ es un isomorfismo.

Ahora definiremos el an\'alogo de la matriz Jacobiana para elementos en los anillos $\hurw_{R}^{k}$
y $\hurw_{R}^{k}[[\x]]$
\begin{definition}
Tome $\mathfrak{a}=(\textbf{a}_{1},...,\textbf{a}_{k})\in\hurw_{R}^{k}$. La matrix Jacobiana de $\mathfrak{a}$ se define como la matriz $k\times k$ en $M_{k\times k}(\hurw_{R})$
\begin{equation}
\delta(\mathfrak{a})=
\left[
\begin{array}{ccc}
\delta_{1}(\textbf{a}_{1})&\cdots&\delta_{k}(\textbf{a}_{1})\\
\vdots&\ddots&\vdots\\
\delta_{1}(\textbf{a}_{k})&\cdots&\delta_{k}(\textbf{a}_{k})
\end{array}
\right]
\end{equation}
Tome $F(\x)=(f_{1}(\x),...,f_{k}(\x))\in\hurw_{R}^{k}[[\x]]$. La matrix Jacobiana de $F(\x)$ se define como la matriz $k\times k$ en $M_{k\times k}(\hurw_{R}[[\x]])$
\begin{equation}
\frac{\partial F(\x)}{\partial\x}=
\left[
\begin{array}{ccc}
\partial_{1}(f_{1}(\x))&\cdots&\partial_{k}(f_{1}(\x))\\
\vdots&\ddots&\vdots\\
\partial_{1}(f_{k}(\x))&\cdots&\partial_{k}(f_{k}(\x))
\end{array}
\right]
\end{equation}
\end{definition}

Luego
\begin{equation}
\frac{\partial}{\partial\x}\rho_{\x}(\mathfrak{a})=\rho_{\x}(\delta(\mathfrak{a}))
\end{equation}

Ahora definimos una derivada sobre el anillo $\hurw_{R}^{k}$ 
\begin{proposition}
Fije $\mathfrak{h}$ en $\hurw_{R}^{k}$. Defina el mapa $\mathfrak{D}_{\mathfrak{h}}:\hurw_{R}^{k}\rightarrow\hurw_{R}^{k}$ como $\mathfrak{D}_{\mathfrak{h}}(\mathfrak{a})=\delta(\mathfrak{a})\mathfrak{h}^{\top}$ para cada
$\mathfrak{a}$ en $\hurw_{R}^{k}$, en donde $\mathfrak{h}^{\top}$ es el vector columna de 
$\mathfrak{h}$. Entonces $\mathfrak{D}_{\mathfrak{h}}$ es una derivaci\'on sobre el anillo 
$\hurw_{R}^{k}$. 
\end{proposition}
\begin{proof}
Tome $\mathfrak{a},\mathfrak{b}$ en $\hurw_{R}^{k}$. Como la matriz Jacobiana $\delta$ es lineal, entonces
\begin{eqnarray*}
\mathfrak{D}_{\mathfrak{h}}(\mathfrak{a}+\mathfrak{b})&=&\delta(\mathfrak{a}+\mathfrak{b})\mathfrak{h}^{\top}\\
&=&[\delta(\mathfrak{a})+\delta(\mathfrak{b})]\mathfrak{h}^{\top}\\
&=&\delta(\mathfrak{a})\mathfrak{h}^{\top}+\delta(\mathfrak{b})\mathfrak{h}^{\top}\\
&=&\mathfrak{D}_{\mathfrak{h}}(\mathfrak{a})+\mathfrak{D}_{\mathfrak{h}}(\mathfrak{b}).
\end{eqnarray*}
Ahora probaremos la regla de Leibniz. Tenemos
\begin{eqnarray*}
\mathfrak{D}_{\mathfrak{h}}(\mathfrak{a}\ast\mathfrak{b})&=&\delta(\mathfrak{a}\ast\mathfrak{b})\mathfrak{h}^{\top}\\
&=&\left[
\begin{array}{ccc}
\delta_{1}(\textbf{a}_{1}\ast\textbf{b}_{1})&\cdots&\delta_{k}(\textbf{a}_{1}\ast\textbf{b}_{1})\\
\vdots&\ddots&\vdots\\
\delta_{1}(\textbf{a}_{k}\ast\textbf{b}_{k})&\cdots&\delta_{k}(\textbf{a}_{k}\ast\textbf{b}_{k})
\end{array}
\right]\mathfrak{h}^{\top}\\
&=&\left[
\begin{array}{ccc}
\delta_{1}(\textbf{a}_{1})\ast\textbf{b}_{1}+\textbf{a}_{1}\ast\delta_{1}(\textbf{b}_{1})&\cdots&\delta_{k}(\textbf{a}_{1})\ast\textbf{b}_{1}+\textbf{a}_{1}\ast\delta_{k}(\textbf{b}_{1})\\
\vdots&\ddots&\vdots\\
\delta_{1}(\textbf{a}_{k})\ast\textbf{b}_{k}+\textbf{a}_{k}\ast\delta_{1}(\textbf{b}_{k})&\cdots&\delta_{k}(\textbf{a}_{k})\ast\textbf{b}_{k}+\textbf{a}_{k}\ast\delta_{k}(\textbf{b}_{k})
\end{array}
\right]\mathfrak{h}^{\top}\\
&=&\left[
\begin{array}{ccc}
\delta_{1}(\textbf{a}_{1})\ast\textbf{b}_{1}&\cdots&\delta_{k}(\textbf{a}_{1})\ast\textbf{b}_{1}\\
\vdots&\ddots&\vdots\\
\delta_{1}(\textbf{a}_{k})\ast\textbf{b}_{k}&\cdots&\delta_{k}(\textbf{a}_{k})\ast\textbf{b}_{k}
\end{array}
\right]\mathfrak{h}^{\top}\\
&&+\left[
\begin{array}{ccc}
\textbf{a}_{1}\ast\delta_{1}(\textbf{b}_{1})&\cdots&\textbf{a}_{1}\ast\delta_{k}(\textbf{b}_{1})\\
\vdots&\ddots&\vdots\\
\textbf{a}_{k}\ast\delta_{1}(\textbf{b}_{k})&\cdots&\textbf{a}_{k}\ast\delta_{k}(\textbf{b}_{k})
\end{array}
\right]\mathfrak{h}^{\top}\\
&=&[\delta(\mathfrak{a})\ast\mathfrak{b}]\mathfrak{h}^{\top}+[\mathfrak{a}\ast\delta(\mathfrak{b})]\mathfrak{h}^{\top}\\
&=&[\delta(\mathfrak{a})\mathfrak{h}^{\top}]\ast\mathfrak{b}+\mathfrak{a}\ast[\delta(\mathfrak{b})\mathfrak{h}^{\top}]
=\mathfrak{D}_{\mathfrak{h}}(\mathfrak{a})\ast\mathfrak{b}+\mathfrak{a}\ast\mathfrak{D}_{\mathfrak{h}}(\mathfrak{b})
\end{eqnarray*}
\end{proof}

\begin{proposition}
De manera an\'aloga fije $\phi$ en $\hurw_{R}^{k}[[\x]]$. Defina el mapa
$\mathfrak{D}_{\phi}:\hurw_{R}^{k}[[\x]]\rightarrow\hurw_{R}^{k}[[\x]]$ como $\mathfrak{D}_{\phi}(F(\x))=\frac{\partial F(\x)}{\partial\x}\phi^{\top}$ para cada $F(\x)$ en $\hurw_{R}^{k}[[\x]]$. 
Entonces $\mathfrak{D}_{\mathfrak{h}}$ es una derivaci\'on sobre el anillo 
$\hurw_{R}^{k}[[\x]]$.
\end{proposition}
\begin{proof}
Suponga que $\mathfrak{D}_{\mathfrak{h}}$ es una derivaci\'on sobre $\hurw_{R}^{k}$. Entonces
\begin{equation}
\rho_{\x}[\mathfrak{D}_{\mathfrak{h}}(\mathfrak{a})]=\rho_{\x}(\delta(\mathfrak{a})\mathfrak{h}^{\top})=\rho_{\x}(\delta(\mathfrak{a}))\rho_{\x}(\mathfrak{h}^{\top})=\frac{\partial\rho_{\x}(\mathfrak{a})}{\partial\x}\rho_{\x}(\mathfrak{h}^{\top})
\end{equation}
Defina $\mathfrak{D}_{\phi}=\rho_{\x}(\mathfrak{D}_{\mathfrak{h}})$ con 
$\phi=\rho_{\x}(\mathfrak{h})$. Entonces
\begin{eqnarray*}
\mathfrak{D}_{\phi}(F(\x)+G(\x))&=&\frac{\partial[\rho_{\x}(\mathfrak{a})+\rho_{\x}(\mathfrak{b})]}{\partial\x}\rho_{\x}(\mathfrak{h}^{\top})\\
&=&\frac{\partial\rho_{\x}(\mathfrak{a})}{\partial\x}\rho_{\x}(\mathfrak{h}^{\top})+\frac{\partial\rho_{\x}(\mathfrak{b})}{\partial\x}\rho_{\x}(\mathfrak{h}^{\top})\\
&=&\mathfrak{D}_{\phi}(F(\x))+\mathfrak{D}_{\phi}(G(\x))
\end{eqnarray*}
y
\begin{eqnarray*}
\mathfrak{D}_{\phi}(F(\x)\cdot G(\x))&=&\frac{\partial[\rho_{\x}(\mathfrak{a})\cdot\rho_{\x}(\mathfrak{b})]}{\partial\x}\rho_{\x}(\mathfrak{h}^{\top})\\
&=&\frac{\partial\rho_{\x}(\mathfrak{a}\ast\mathfrak{b})}{\partial\x}\rho_{\x}(\mathfrak{h}^{\top})\\
&=&\rho_{\x}(\delta(\mathfrak{a}\ast\mathfrak{b}))\rho_{\x}(\mathfrak{h}^{\top})\\
&=&\rho_{\x}(\delta(\mathfrak{a})\ast\mathfrak{b}+\mathfrak{a}\ast\delta(\mathfrak{b}))\rho_{\x}(\mathfrak{h}^{\top})\\
&=&\rho_{\x}(\delta(\mathfrak{a})\ast\mathfrak{b})\rho_{\x}(\mathfrak{h}^{\top})+\rho_{\x}(\mathfrak{a}\ast\delta(\mathfrak{b}))\rho_{\x}(\mathfrak{h}^{\top})\\
&=&[\rho_{\x}(\delta(\mathfrak{a}))\cdot\rho_{\x}(\mathfrak{b})]\rho_{\x}(\mathfrak{h}^{\top})+[\rho_{\x}(\mathfrak{a})\cdot\rho_{\x}(\delta(\mathfrak{b}))]\rho_{\x}(\mathfrak{h}^{\top})\\
&=&\left[\frac{\partial\rho_{\x}(\mathfrak{a})}{\partial\x}\rho_{\x}(\mathfrak{h}^{\top})\right]\cdot\rho_{\x}(\mathfrak{b})+\rho_{\x}(\mathfrak{a})\cdot\left[\frac{\partial\rho_{\x}(\mathfrak{b})}{\partial\x}\rho_{\x}(\mathfrak{h}^{\top})\right]\\
&=&\mathfrak{D}_{\phi}(F(\x))\cdot G(\x)+F(\x)\cdot\mathfrak{D}_{\phi}(G(\x)).
\end{eqnarray*}
\end{proof}

Los ideales en $\hurw_{R}^{k}[[\x]]$ son de la forma $\prod_{i}^{n}J_{i}$, donde
cada $J_{i}$ son o de la forma $I+\left\langle\x^{\epsilon}\right\rangle$ o de la
forma $H_{I}[[\x]]$, con $I$ un ideal en $R$.

\section{Anillos $\phi$-expansión y aut\'onomo}

\begin{definition}
Fije un $\phi$ en $\hurw_{R}^{k}[[\x]]$. Definimos el operador aut\'onomo actuando sobre 
$\hurw_{R}^{k}[[\x]]$ como el mapa $\Opa_{\phi}:\hurw_{R}^{k}[[\x]]\rightarrow\hurw_{S}$, 
$S=\hurw_{R}^{k}[[\x]]$, definido por
\begin{equation}\label{eqn_oper_auto}
\Opa_{\phi}(F(\x))=(\mathfrak{D}_{\phi}^{n}(F(\x)))_{n\in\N}
\end{equation}
en donde $\mathfrak{D}_{\phi}^{0}(F(\x))=F(\x)$ y $\mathfrak{D}_{\phi}^{n+1}(F(\x))=\mathfrak{D}_{\phi}[\mathfrak{D}_{\phi}^{n}(F(\x))]$
\end{definition}

Para un $\phi$ fijo el conjunto $\Opa_{\phi}(\hurw_{R}^{k}[[\x]])$ con las operaciones $+$ y $\ast$ es un anillo conmutativo con unidad. El anillo $\Opa_{\phi}(\hurw_{R}^{k}[[\x]])$ ser\'a llamado \textbf{anillo} 
$\phi$-\textbf{expansi\'on} del anillo $\hurw_{R}^{k}[[\x]]$. Como 
$\mathfrak{D}_{\phi}(aF(\x))=a\mathfrak{D}_{\phi}(F(\x))$, entonces 
$\Opa_{\phi}(aF(\x))=a\Opa_{\phi}(F(\x))$ y $\Opa_{\phi}(\hurw_{R}^{k}[[\x]])$ es un $R$-\'algebra.

\begin{theorem}
El mapa $\Opa_{\phi}$ es un isomorfismo de anillos.
\end{theorem}
\begin{proof}
Tome $\Opa_{\phi}(F(\x)),\Opa_{\phi}(G(\x))$ en $\Opa_{\phi}(\hurw_{R}^{k}[[\x]])$. Entonces
\begin{eqnarray*}
\Opa_{\phi}(F(\x))+\Opa_{\phi}(G(\x))&=&(\mathfrak{D}_{\phi}^{n}(F(\x)))_{n\in\N}+(\mathfrak{D}_{\phi}^{n}(G(\x)))_{n\in\N}\\
&=&(\mathfrak{D}_{\phi}^{n}(F(\x)+G(\x)))_{n\in\N}\\
&=&\Opa_{\phi}(F(\x)+G(\x))
\end{eqnarray*}
y
\begin{eqnarray*}
\Opa_{\phi}(F(\x))\ast\Opa_{\phi}(G(\x))&=&(\mathfrak{D}_{\phi}^{n}(F(\x)))_{n\in\N}\ast(\mathfrak{D}_{\phi}^{n}(G(\x)))_{n\in\N}\\
&=&\left(\sum_{k=0}^{n}\binom{n}{k}\mathfrak{D}_{\phi}^{k}(F(\x))\cdot\mathfrak{D}_{\phi}^{n-k}(G(\x))\right)_{n\in\N}\\
&=&(\mathfrak{D}_{\phi}^{n}(F(\x)\cdot G(\x)))_{n\in\N}\\
&=&\Opa_{\phi}(F(\x)\cdot G(\x))
\end{eqnarray*}
Luego $\Opa_{\phi}(F(\x))+\Opa_{\phi}(G(\x))$ y $\Opa_{\phi}(F(\x))\ast\Opa_{\phi}(G(\x))$ est\'an en
$\Opa_{\phi}(\hurw_{R}^{k}[[\x]])$.
\end{proof}

Ahora construiremos el anillo aut\'onomo y para ello fijaremos $F(\x)$ en $\hurw_{R}^{k}[[\x]]$.
Primero encontraremos un resultado sobre el mapa $\Opa_{a\phi}$ para cada
$a\in R$. En [?] fue definido el anillo $(\exp(R),\ast,\cdot)$ de las expansiones 
$\exp(a)=(1,a,a^{2},a^{3},...)$ para $a\neq0$ y $\exp(0)=(1,0,0,0,...)$, en donde el producto $\cdot$
es definido componente a componente. Luego

\begin{proposition}\label{prop_exp}
Tome $\phi$ en $\hurw_{R}^{k}[[\x]]$ y $a\in R$. Entonces
\begin{equation}
\exp(a)\Opa_{\phi}(F(\x))=\Opa_{a\phi}(F(\x))
\end{equation}
para cada $F(\x)\in\hurw_{R}^{k}[[\x]]$.
\end{proposition}
\begin{proof}
Para $a\neq0$ es cierto que $\mathfrak{D}_{a\phi}(F(\x))=a\mathfrak{D}_{\phi}(F(\x))$. Luego 
$\mathfrak{D}_{a\phi}^{n}(F(\x))=a\mathfrak{D}_{\phi}^{n}(F(\x))$ y esto implica que
\begin{eqnarray*}
\Opa_{a\phi}(F(\x))&=&(a^{n}\mathfrak{D}_{\phi}^{n}(F(\x)))_{n\in\N}\\
&=&\exp(a)\Opa_{\phi}(F(\x))
\end{eqnarray*}
Cuando $a=0$,
\begin{eqnarray*}
\Opa_{0\phi}(F(\x))=\Opa_{0}(F(\x))=(F(\x),0,0,...)=\exp(0)\Opa_{\phi}(F(\x))
\end{eqnarray*}
\end{proof}

\begin{definition}
Fije $F(\x)$. Definimos la suma $\boxplus$ en $\Opa_{S}(F(\x))$, $S=\hurw_{R}^{k}[[\x]]$, de la
siguiente manera
\begin{equation}
\Opa_{\phi}(F(\x))\boxplus\Opa_{\psi}(F(\x))=(B_{n}(\phi,\psi)(F(\x)))_{n\in\N}
\end{equation}
con
\begin{eqnarray}
B_{0}(\phi,\psi)(F(\x))&=&F(\x)\\
B_{1}(\phi,\psi)(F(\x))&=&\mathfrak{D}_{\phi}(F(\x))+\mathfrak{D}_{\psi}(F(\x))\\
B_{n}(\phi,\psi)(F(\x))&=&\mathfrak{D}_{\phi}^{n}(F(\x))+\mathfrak{D}_{\psi}^{n}(F(\x))+H_{n}(\phi,\psi),\ \ \ n\geq2
\end{eqnarray}
en donde $H_{n}(f,g)$ satisface
\begin{equation}\label{eqn_Hn}
H_{n+1}(\phi,\psi)=\left[\frac{\partial}{\partial\x}\mathfrak{D}_{\psi}^{n}(F(\x))\right]\phi+\left[\frac{\partial}{\partial\x}\mathfrak{D}_{\phi}^{n}(F(\x))\right]\psi+\left[\frac{\partial}{\partial\x}H_{n}(\phi,\psi)\right](\phi+\psi)
\end{equation}
\end{definition}

\begin{lemma}\label{lemma_ope_+}
Para todo $\phi,\psi\in\hurw_{R}^{k}[[\x]]$ se tiene
\begin{equation}
\Opa_{\phi}(F(\x))\boxplus\Opa_{\psi}(F(\x))=\Opa_{\phi+\psi}(F(\x))
\end{equation}
\end{lemma}
\begin{proof}
Calculando la $n$-\'esima derivada direccional de $F(\x)$ en la direcci\'on $\phi+\psi$ obtenemos 
\begin{equation}
\mathfrak{D}_{\phi+\psi}^{n}(F(\x))=\mathfrak{D}_{\phi}^{n}(F(\x))+\mathfrak{D}_{\psi}^{n}(F(\x))+H_{n}(\phi,\psi).
\end{equation}
Si $H_{n}(\phi+\psi)$ satisface (\ref{eqn_Hn}), tenemos
\begin{eqnarray*}
\mathfrak{D}_{\phi+\psi}^{n+1}(F(\x))&=&\left[\frac{\partial}{\partial\x}\mathfrak{D}_{\phi+\psi}^{n}(F(\x))\right](\phi+\psi)\\
&=&\frac{\partial}{\partial\x}[\mathfrak{D}_{\phi}^{n}(F(\x))+\mathfrak{D}_{\psi}^{n}(F(\x))+H_{n}(\phi,\psi)](\phi+\psi)\\
&=&\left[\frac{\partial}{\partial\x}\mathfrak{D}_{\phi}^{n}(F(\x))\right]\phi+\left[\frac{\partial}{\partial\x}\mathfrak{D}_{\psi}^{n}(F(\x))\right]\psi+\left[\frac{\partial}{\partial\x}\mathfrak{D}_{\psi}^{n}(F(\x))\right]\phi\\
&&+\left[\frac{\partial}{\partial\x}\mathfrak{D}_{\phi}^{n}(F(\x))\right]\psi+\left[\frac{\partial}{\partial\x}H_{n}(\phi,\psi)\right](\phi+\psi)\\
&=&\mathfrak{D}_{\phi}^{n+1}(F(\x))+\mathfrak{D}_{\psi}^{n+1}(F(\x))+H_{n+1}(\phi,\psi)
\end{eqnarray*}
Luego es seguido el estamento.
\end{proof}

\begin{theorem}
Fije $F(\x)$. $(\Opa_{S}(F(\x)),\boxplus)$ con $S=\hurw_{R}^{k}[[\x]]$ es un grupo conmutativo.
\end{theorem}
\begin{proof}
Por el Lema \ref{lemma_ope_+}, $\Opa_{\phi}(F(\x))\boxplus\Opa_{\psi}(F(\x))\in\Opa_{S}(F(\x))$.
Ahora tome $\textbf{0}\in\hurw_{R}^{k}[[\x]]$. Entonces es f\'acil ver que
$\Opa_{\textbf{0}}(F(\x))=(F(\x),\textbf{0},\textbf{0},...)=\textbf{0}_{\Opa}$ y por el Lema anterior
\begin{eqnarray*}
\Opa_{\phi}(F(\x))\boxplus\textbf{0}_{\Opa}&=&\Opa_{\phi}(F(\x))\boxplus\Opa_{\textbf{0}}(F(\x))\\
&=&\Opa_{\phi+\textbf{0}}(F(\x))\\
&=&\Opa_{\phi}(F(\x))
\end{eqnarray*}
luego $\textbf{0}_{\Opa}$ es el elemento neutro en $(\Opa_{S}(F(\x)),\boxplus)$. Por la proposici\'on 
anterior es f\'acil mostrar que $\exp(-1)\Opa_{\phi}(F(\x))$ es el inverso aditivo de 
$\Opa_{\phi}(F(\x))$. Finalmente la asociatividad y conmutatividad de $(\Opa_{S}(F(\x)),\boxplus)$ siguen de las propias de $(\hurw_{R}^{k}[[\x]],+)$.
\end{proof}

\begin{theorem}\label{theo_exp_modulo}
$(\Opa_{S}(F(\x)),\boxplus)$ es un $\exp(R)$-m\'odulo.
\end{theorem}
\begin{proof}
De la Proposici\'on \ref{prop_exp} tenemos
\begin{eqnarray*}
(\exp(a)\exp(b))\Opa_{\phi}(F(\x))&=&\exp(a)[\exp(b)\Opa_{\phi}(F(\x))]\\
\exp(a)[\Opa_{\phi}(F(\x))\boxplus\Opa_{\psi}(F(\x))]&=&\exp(a)\Opa_{\phi}(F(\x))\boxplus\exp(a)\Opa_{\psi}(F(\x))\\
(\exp(a)\ast\exp(b))\Opa_{\phi}(F(\x))&=&\exp(a)\Opa_{\phi}(F(\x))\boxplus\exp(b)\Opa_{\phi}(F(\x))\\
\exp(1)\Opa_{\phi}(F(\x))&=&\Opa_{\phi}(F(\x))
\end{eqnarray*}
para todo $a,b\in R$ y todo $\phi,\psi\in\hurw_{R}^{k}[[\x]]$.
\end{proof}

\begin{definition}
Definimos el producto $\odot$ en $\Opa_{S}$ actuando sobre $\x=(x_{1},x_{2},...,x_{k})$, con 
$S=\hurw_{R}^{k}[[\x]]$, de la siguiente manera
\begin{equation}
\Opa_{\phi}(\x)\odot\Opa_{\psi}(\x)=\left(C_{n}(\phi,\psi)\right)_{n\in\N}
\end{equation}
con
\begin{eqnarray}
C_{0}(\phi,\psi)&=&\x\\
C_{n}(\phi,\psi)&=&\sum_{l=0}^{n-1}\binom{n-1}{l}\mathfrak{D}_{\phi\cdot\psi}^{l}(\mathfrak{D}_{\phi}(\x))\cdot\mathfrak{D}_{\phi\cdot\psi}^{n-1-l}(\mathfrak{D}_{\psi}(\x)), \ \ n\geq1
\end{eqnarray}
\end{definition}

\begin{lemma}\label{lemma_ope_prod}
Para todo $\phi,\psi$ en $\hurw_{R}^{k}[[\x]]$ se tiene 
\begin{equation}
\Opa_{\phi}(\x)\odot\Opa_{\psi}(\x)=\Opa_{\phi\cdot\psi}(\x).
\end{equation}
\end{lemma}
\begin{proof}
Queremos calcular las primeras derivadas direccionales de $\x$ a lo largo de $\phi\cdot\psi$. 
Tenemos
\begin{eqnarray*}
\mathfrak{D}_{\phi\cdot\psi}(\x)&=&\frac{\partial\x}{\partial\x}(\phi\cdot\psi)=\phi\cdot\psi=\mathfrak{D}_{\phi}(\x)\cdot\mathfrak{D}_{\psi}(\x)\\
\mathfrak{D}_{\phi\cdot\psi}^{2}(\x)&=&\mathfrak{D}_{\phi\cdot\psi}(\x)(\mathfrak{D}_{\phi}(\x)\cdot\mathfrak{D}_{\psi}(\x))\\
&=&\mathfrak{D}_{\phi\cdot\psi}(\x)(\mathfrak{D}_{\phi}(\x))\cdot\mathfrak{D}_{\psi}(\x)+\mathfrak{D}_{\phi}(\x)\cdot\mathfrak{D}_{\phi\cdot\psi}(\x)(\mathfrak{D}_{\psi}(\x))
\end{eqnarray*}
Luego por la f\'ormula de Leibniz para la derivada direccional $n$-\'esima obtenemos
\begin{eqnarray*}
\mathfrak{D}_{\phi\cdot\psi}^{n}(\x)&=&\mathfrak{D}_{\phi\cdot\psi}^{n-1}(\mathfrak{D}_{\phi}(\x)\cdot\mathfrak{D}_{\psi}(\x))\\
&=&\sum_{l=0}^{n-1}\binom{n-1}{l}\mathfrak{D}_{\phi\cdot\psi}^{l}(\mathfrak{D}_{\phi}(\x))\cdot\mathfrak{D}_{\phi\cdot\psi}^{n-1-l}(\mathfrak{D}_{\psi}(\x))
\end{eqnarray*}
y $C_{n}(\phi,\psi)=\mathfrak{D}_{\phi\cdot\psi}^{n}(\x)$.
\end{proof}

Ahora mostraremos que $\Opa_{S}(\x)$ es un anillo

\begin{theorem}
$(\Opa_{S}(\x),\boxplus,\odot)$ es un anillo conmutativo con unidad aditiva 
$\textbf{0}_{\Opa}=(\x,0,0,...)$ y unidad multiplicativa $\textbf{1}_{\Opa}=(\x,\textbf{1},0,0,...)$. 
Este anillo ser\'a llamado \textbf{anillo aut\'onomo} de mapas $\Opa_{\phi}$.
\end{theorem}
\begin{proof}
Ya fue mostrado que $(\Opa_{S}(\x),\boxplus)$ es un grupo conmutativo. Por el Lemma \ref{lemma_ope_prod}, $\Opa_{\phi}(\x)\odot\Opa_{\psi}(\x)$ est\'a contenido
en $\Opa_{S}(\x)$. Adem\'as $\textbf{1}_{\Opa}$ es una unidad con respecto a $\odot$
en $\Opa_{S}(\x)$, pues $\Opa_{\textbf{1}}(\x)=\textbf{1}_{\Opa}$ y
\begin{eqnarray*}
\textbf{1}_{\Opa}\odot\Opa_{\phi}(\x)&=&\Opa_{\textbf{1}}(\x)\odot\Opa_{\phi}(\x)\\
&=&\Opa_{\textbf{1}\cdot\phi}(\x)\\
&=&\Opa_{\phi}(\x).
\end{eqnarray*}
La asociatividad sigue de la asociatividad de $\cdot$ en $\hurw_{R}^{k}[[\x]]$. Finalmente 
\begin{eqnarray*}
\Opa_{\varphi}(\x)\odot(\Opa_{\phi}(\x)\boxplus\Opa_{\psi}(\x))&=&\Opa_{\varphi}(\x)\odot\Opa_{\phi+\psi}(\x)\\
&=&\Opa_{\varphi\cdot(\phi+\psi)}(\x)\\
&=&\Opa_{\varphi\cdot\phi+\varphi\cdot\psi}(\x)\\
&=&\Opa_{\varphi\cdot\phi}(\x)\boxplus\Opa_{\varphi\cdot\psi}(\x)\\
&=&[\Opa_{\varphi}(\x)\odot\Opa_{\phi}(\x)]\boxplus[\Opa_{\varphi}(\x)\odot\Opa_{\psi}(\x)]
\end{eqnarray*}
Luego $(\Opa_{S}(\x),\boxplus,\odot)$ es un anillo con unidad $\textbf{1}_{\Opa}$ y divisores de cero.
\end{proof}

\begin{theorem}
$\Opa_{S}(\x)$ es un $\exp(R)$-\'algebra.
\end{theorem}
\begin{proof}
Sigue directamente del Teorema \ref{theo_exp_modulo} y de la Proposici\'on \ref{prop_exp}.
\end{proof}

\section{Flujo $k$-dimensional}

\begin{definition}
Definimos un \textbf{sistema din\'amico} $k$-dimensional sobre el anillo $R$ como la terna 
$(R,\hurw_{S}[[t]],\Phi)$ donde $R$ es el \textbf{conjunto de tiempos}, $\hurw_{S}[[t]]$ el \textbf{espacio de fases} y 
$\Phi$ es el mapa $\Phi:R\times\hurw_{S}[[t]]\times\hurw_{R}^{k}[[\x]]\rightarrow\hurw_{S}[[t]]$, con $S=\hurw_{R}^{k}[[\x]]$, definido por
\begin{equation}\label{flujo_diferencial}
\Phi(t,\x,F(\x))=\x+\sum_{n=1}^{\infty}\mathfrak{D}_{F(\x)}^{n}(\x)\frac{t^{n}}{n!}.
\end{equation}
\end{definition}

\begin{theorem}
$\Phi(t,\x,F(\x))$ es un flujo.
\end{theorem}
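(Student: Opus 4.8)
The plan is to verify directly that $\Phi(t,x,\Delta f(x))$ satisfies the two defining properties of a flow, namely $\Phi(0,x,\Delta f(x))=x$ and $\Phi(t,\Phi(s,x,\Delta f(x)),\Delta f(x))=\Phi(t+s,x,\Delta f(x))$. The first is immediate: setting $t=0$ in \eqref{flujo_diferencial} kills every term of the series, leaving exactly $x$. So the substance of the argument is the group law in the time variable.

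For the group law, the cleanest route is to observe that, by Corollary \ref{eqn_cor_der_aut} (more precisely equation \eqref{eqn_cor_der_aut}), the coefficients $A_{n}([\delta^{n-1}f(x)])$ are precisely the iterates produced by the operator $f(x)\delta(\cdot)$ applied $n$ times to $1$. Hence $u(t):=\Phi(t,x,\Delta f(x))$ is the formal power series solution (in $t$) of the initial value problem $\partial_t u = f(u)$, $u(0)=x$, since term-by-term differentiation in $t$ together with Theorem \ref{eqn_teor_der_aut}, i.e. $A_{n+1}=f\,\delta(A_{n})$, shows that the coefficient recursion for $\Phi$ is exactly the one generated by $\partial_t u=f(u)$ via Taylor expansion. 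First I would make this identification explicit: show $\partial_t\Phi(t,x,\Delta f(x)) = f(\Phi(t,x,\Delta f(x)))$ as an identity of formal power series in $t$ with coefficients in $\expgr_{R}[X]$ (this is legitimate because $\Q\subseteq R$, so we may divide by factorials). Then both $t\mapsto\Phi(t,\Phi(s,x,\Delta f(x)),\Delta f(x))$ and $t\mapsto\Phi(t+s,x,\Delta f(x))$ solve the same autonomous formal ODE $\partial_t w=f(w)$ with the same initial value $w(0)=\Phi(s,x,\Delta f(x))$ at $t=0$; by uniqueness of the formal power series solution of an autonomous first-order equation (the coefficients are determined recursively, one at a time), the two series coincide. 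This gives property 2.

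The one point that needs care — and which I expect to be the main obstacle — is justifying the manipulations of these doubly-infinite formal objects: $\Phi$ lives in $\expgr_{S}[T]$ with $S=\expgr_{R}[X]^{\mathbb{N}}$, so ``substituting'' $\Phi(s,x,\Delta f(x))$ for $x$ inside another copy of $\Phi$ is a composition of formal series that must be checked to be well defined, and the uniqueness-of-solution argument must be phrased so that each Taylor coefficient in $t$ is a polynomial expression in finitely many earlier coefficients (which is exactly what the recursion $A_{n+1}=f\,\delta(A_n)$ guarantees). Once the formal-series framework is set up so that differentiation in $t$, the chain rule for $\partial_t f(u)$, and coefficient comparison are all legal, the rest is the standard ODE-flow argument transcribed into this setting. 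I would therefore spend the bulk of the write-up on the identity $\partial_t\Phi = f(\Phi)$ and the recursive determinacy of coefficients, and treat the deduction of $\Phi(0,\cdot)=x$ and the semigroup identity as short consequences.
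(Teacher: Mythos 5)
Your proposal is correct and follows essentially the same route as the paper: the $t=0$ case is immediate, and the semigroup law reduces to the fact that $\Phi$ solves $\delta_{t}\Phi=f(\Phi)$, so that the Taylor coefficients $\delta_{s}^{n}\Phi(s,x,\Delta f(x))$ coincide with $A_{n}([\delta^{n-1}f(\Phi_{s})])$ via the recursion $A_{n+1}=f\,\delta(A_{n})$. The paper phrases this as a direct comparison of the two Taylor expansions rather than as uniqueness of the formal ODE solution, but the underlying identity is the same, and you are in fact more explicit than the paper about the substitution and coefficient-determinacy issues it leaves implicit.
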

\begin{proof}
Cuando $t=0$ tenemos de (\ref{flujo_diferencial}) que $\Phi(0,\x,F(\x))=\x$. Ahora probaremos
la propiedad 2) de un flujo. Por un lado, la expansión de Taylor de $\Phi(s+t,\x,F(\x))$ es
$\sum_{n=0}^{\infty}\mathfrak{D}_{F(\Phi_{s})}^{n}\Phi(s,\x,F(\x))\frac{t^{n}}{n!}$. Por otro lado, 
haciendo $\Phi_{s}=\Phi(s,\x,F(\x))$ tenemos
\begin{eqnarray*}
\Phi(t,\Phi(s,\x,F(\x)),F(\x))
&=&\Phi_{s}+\sum_{n=1}^{\infty}\mathfrak{D}_{F(\Phi_{s})}^{n}\Phi(s,\x,F(\x))\frac{t^{n}}{n!}\\
&=&\sum_{n=0}^{\infty}\mathfrak{D}_{F(\Phi_{s})}^{n}\Phi(s,\x,F(\x))\frac{t^{n}}{n!}\\
&=&\Phi(s+t,\x,F(\x)).
\end{eqnarray*}
como queríamos probar.
\end{proof}

Claramente $\{\Phi_{t}:t\in R\}$ es un grupo 
actuando sobre $\hurw_{S}[[t]]$. 

\begin{theorem}
Tome $F(\x)\in\hurw_{R}^{k}[[\x]]$. Entonces
\begin{equation}
\mathfrak{D}_{F(\x)}(\Phi(t,\x,F(\x)))=\delta_{t}\Phi(t,\x,F(\x))=F(\Phi).
\end{equation}
\end{theorem}
\begin{proof}
De (\ref{flujo_diferencial}) tenemos
\begin{eqnarray*}
\frac{\partial}{\partial\x}\Phi(t,\x,F(\x))F(\x)&=&\frac{\partial}{\partial\x}\left(\x+\sum_{n=1}^{\infty}\mathfrak{D}_{F(\x)}^{n}(\x)\dfrac{t^{n}}{n!}\right)F(\x)\\
&=&F(\x)+\sum_{n=1}^{\infty}\frac{\partial}{\partial\x}\mathfrak{D}_{F(\x)}^{n}(\x)F(\x)\dfrac{t^{n}}{n!}\\
&=&F(\x)+\sum_{n=1}^{\infty}\mathfrak{D}_{F(\x)}^{n+1}(\x)\dfrac{t^{n}}{n!}\\
&=&\sum_{n=0}^{\infty}\mathfrak{D}_{F(\x)}^{n+1}(\x)\dfrac{t^{n}}{n!}\\
&=&\delta_{t}\Phi(t,\x,F(\x)).
\end{eqnarray*}
\end{proof}

A continuaci\'on probaremos que los flujos de $\delta_{t}\Phi=F(\Phi)$ y $\delta_{t}\Phi=aF(\Phi)$,
con $a\in R$, tienen trayectorias que concuerdan. Es decir, 

\begin{theorem}
Para todo $a\in R$ se cumple que $\Phi(t,\x,aF(\x))=\Phi(at,\x,F(\x))$.
\end{theorem}
\begin{proof}
De la Proposici\'on \ref{prop_exp} sabemos que $\Opa_{aF(\x)}(\x)=\exp(a)\cdot\Opa_{F(\x)}(\x)$ para todo 
$a\in R$. Para $a\neq0$ tenemos
\begin{eqnarray*}
\Phi(t,\x,aF(\x))&=&\x+\sum_{n=1}^{\infty}a^{n}\mathfrak{D}_{F(\x)}^{n}(\x)\frac{t^{n}}{n!}\\
&=&\x+\sum_{n=1}^{\infty}\mathfrak{D}_{F(\x)}^{n}(\x)\frac{(at)^{n}}{n!}\\
&=&\Phi(at,\x,F(\x)),
\end{eqnarray*}
Cuando $a=0$, $\Phi(t,\x,\textbf{0})=\Phi(0,\x,F(\x))=\x$
\end{proof}

El teorema anterior implica que existe una correspondencia 1-1 entre las clases
$\exp(R)\cdot\Opa_{F(\x)}(\x)$ y el flujo $\Phi(R,\x,F(\x))$. Esto es cierto porque
\begin{equation*}
\Phi(t,\x,RF(\x))=\Phi(tR,\x,F(\x))=\Phi(R,\x,F(\x))    
\end{equation*}
Sea $I$ un ideal en $R$. Entonces    
\begin{equation*}
\Phi(I,\x,RF(\x))=\Phi(I,\x,F(\x))    
\end{equation*}
Ahora fije $\x$ en $R^{k}$ y $F(\x)$ en $\hurw_{R}^{k}[[\x]]$. Defina el homomorfismo de grupos $\sigma:R\rightarrow\Phi(R,\x,F(\x))$ por by $\sigma(a)=\Phi(a,\x,F(\x))$. Queremos extender el homomorfismo $\sigma$ a un homomorfismo de $R$-módulos mostrando que $\Phi(R,\x,F(\x))$ tiene precisamente estructura de $R$-módulo.
Por la propiedad de grupo de $\Phi(R,\x,F(\x))$, para todo $n\in\N$  tenemos
$$\Phi(nt,\x,F(\x))=\Phi(R,\x,F(\x))\circ\Phi(R,\x,F(\x))\circ\cdots\circ\Phi(R,\x,F(\x))$$
Entonces componemos el flujo $\Phi(t,\x,F(\x))$ consigo mismo $n$ veces y podemos definir
$$n\star\Phi(t,\x,F(\x))=\Phi(t,\x,F(\x))\circ\Phi(t,\x,F(\x))\circ\cdots\circ\Phi(t,\x,F(\x)).$$

De esta manera podemos definir el producto $\star:R\times\Phi(R,\x,F(\x))\rightarrow\Phi(R,\x,F(\x))$ por
$$a\star\Phi(t,\x,F(\x))=\Phi(at,\x,F(\x))$$ 
para todo $a\in R$ y es muy fácil observar que con el producto $\star$ el subgrupo $\Phi(R,\x,F(\x))$ adopta estructura de $R$-módulo y $\sigma$ se convierte en un homomorfismo de $R$-módulo, ya que el anillo $R$ es un $R$-módulo en sí mismo. Ahora queremos entender bajo qué condiciones del elemento fijo $x$ el homomorfismo $\sigma$ se convierte en un isomorfismo de $R$-módulo.

Ahora fije $F(\x)=(f_{1}(\x),...,f_{k}(\x))$, con cada $f_{i}(\x)$ un polinomio en $R[\x]$, y defina el mapa $\epsilon_{\mathbf{a}}$, $\mathbf{a}\in R^{k}$, actuando sobre el flow $\Phi(R,\x,F(\x))$ por
\begin{equation}
\epsilon_{a}\Phi(t,\x,F(\x))=\epsilon_{a}\left(\x+\sum_{n=1}^{\infty}\mathfrak{D}_{F(\x)}^{n}(\x)\frac{t^{n}}{n!}\right)=\mathbf{a}+\sum_{n=1}^{\infty}\mathfrak{D}_{F(\mathbf{a})}^{n}(\mathbf{a})\frac{t^{n}}{n!}
\end{equation}
y sea
\begin{equation}
\Gamma_{a}:=\{\epsilon_{\mathbf{a}}\Phi(t,\x,F(\x)):t\in R\}
\end{equation}
la orbita o trayectoria de $\mathbf{a}$. 
Si $\Gamma_{\x_{0}}={\x_{0}}$, entonces $\x_{0}$ es un punto de equilibrio para $\Phi(R,\x,F(\x))$. Los puntos de equilibrio se obtienen cuando $\mathfrak{D}_{F(\x_{0})}^{n}(\x_{0})=\mathbf{0}$ para $n\geq1$, es decir, cuando
$F(\x_{0})=0$ para algún $\x_{0}\in R^{k}$. Si $\x_{0}$ no es un punto de equilibrio, entonces se llamará punto regular de $\Phi(R,\x,F(\x))$. 
Denote
\begin{eqnarray*}
\ker(\sigma)&=&\{t\in R:\sigma(t)=\x\}\\
&=&\{t\in R:\Phi(t,\x,F(\x))=\x\}
\end{eqnarray*} 
el núcleo de $\sigma$. Si $\overline{\x}$ es un punto de equilibrio, entonces
$\sigma(R)=\overline{\x}$ y $\ker(\sigma)=R$. Si $\x$ es un punto regular, $\ker(\sigma)=\{0\}$ y 
$\sigma$ es un mapa inyectivo. Como $\sigma$ es sobreyectivo, entonces $\sigma$ es un isomorfismo de $R$-módulos. Diremos que el módulo $\Phi(R,\x,F(\x))$ es un módulo trivial si $\x$ es un punto de equilibrio. En caso contrario se llamaría módulo no trivial.

Sea $I$ un ideal de $R$ y $\x$ un punto regular. Entonces $\sigma(I)=\Phi(I,\x,F(\x))$ es un submódulo en $\Phi(R,\x,F(\x))$. Entonces podemos establecer la siguiente correspondencia
\begin{eqnarray}
\{\text{Ideals $I$ in }R\}\Leftrightarrow\{\text{Submodules }\Phi_{I} \text{ of }\Phi(R,\x,F(\x))\}\Leftrightarrow\{\mathbf{u}^{\prime}=IF(\mathbf{u})\}
\end{eqnarray}
donde $\{\mathbf{u}^{\prime}=IF(\mathbf{u})\}$ denota el conjunto de todo los sistemas dinámico de la forma $\mathbf{u}^{\prime}=aF(\mathbf{u})$, con $a\in I$.

Terminamos esta sección con el siguiente resultado
\begin{theorem}
Un $R$-módulo no trivial $\Phi(R,\x,F(\x))$ es un módulo cíclico sin torsión.
\end{theorem}
\begin{proof}
Supongamos que $\x$ es un punto regular. Para demostrar que $\Phi(R,\x,F(\x))$ es un módulo cíclico, basta con 
tomar una unidad $u$ en $R$. Entonces $R\star\Phi(u,\x,F(\x))=\Phi(uR,\x,F(\x))=\Phi(R,\x,F(\x))$ y así 
$\Phi(R,\x,F(\x))$ es cíclico. Ahora demostraremos que $\Phi(R,\x,F(\x))$ es libre de torsión.
Por un lado, existe un ideal $I\subset R$ tal que $\Phi(R,\x,F(\x))$ es isomorfo a $R/I$. Como ya se demostró que $R$ y $\Phi(R,\x,F(\x))$ son isomorfos, entonces se deduce que $I$ es el ideal cero. Por otro lado, que $\Phi(R,\x,F(\x))$ sea un $R$-módulo cíclico es equivalente a decir que el homomorfismo de multiplicación $\tau_{s}:R\rightarrow\Phi(R,\x,F(\x))$, 
$\tau_{s}(a)=a\star\Phi(s,\x,F(\x))$, es un homomorfismo de $R$-módulos sobreyectivos. Escriba $I=\ker(\tau_{s})$. Por el primer teorema de isomorfismo para módulo, $\overline{\tau_{s}}$ es 
un isomorfismo de $R/I$ a $\Phi(R,\x,F(\x))$. Como $\overline{\tau_{s}}$ es el aniquilador 
$\ann(\Phi(s,\x,F(\x)))$ de $\Phi(s,\x,F(\x))$, entonces $\ann(\Phi(s,\x,F(\x)))=I=0$ para cualquier 
$\Phi(s,\x,F(\x))\in\Phi(R,\x,F(\x))$ y $0$ es el único elemento de torsión en $\Phi(R,\x,F(\x))$.
\end{proof}

\section{El anillo $(\rho_{t}\Opa_{S}(\x),\boxplus,\odot)$, $S=\hurw_{R}^{k}[[\x]]$}

Fije $\phi$ en $\hurw_{R}^{k}[[\x]]$. Defina el mapa $\rho_{t}:\Opa_{\phi}(\hurw_{R}^{k}[[\x]])\rightarrow\hurw_{S}[[t]]$, donde $S=\hurw_{R}^{k}[[\x]]$,
como 
\begin{equation}
\rho_{t}\Opa_{\phi}(F(\x))=F(\x)+\sum_{n=1}^{\infty}\mathfrak{D}_{\phi}^{n}(F(\x))\frac{t^{n}}{n!}.
\end{equation}
Entonces
\begin{eqnarray}
\Phi(t,\x,\phi(\x))&=&\rho_{t}\Opa_{\phi}(\x).
\end{eqnarray}

\begin{theorem}
Fije $\phi$ en $\hurw_{R}^{k}[[\x]]$. Entonces $\rho_{t}\Opa_{\phi}(\hurw_{R}^{k}[[\x]])$ es un 
$R$-\'algebra con suma y producto ordinario de series de potencias.
\end{theorem}
\begin{proof}
Tome $F(\x)$ y $G(\x)$ en $\hurw_{R}^{k}[[\x]]$. Como ya fue mostrado que 
$(\Opa_{\phi}(\hurw_{R}^{k}[[\x]]),+,\ast)$ es un $R$-\'algebra, entonces es f\'acil mostrar que
\begin{eqnarray*}
\rho_{t}[\Opa_{\phi}(F(\x))\ast\Opa_{\phi}(G(\x))]&=&\rho_{t}\Opa_{\phi}(F(\x))\cdot\rho_{t}\Opa_{\phi}(G(\x)),\\
\rho_{t}[\Opa_{\phi}(F(\x))+\Opa_{\phi}(G(\x))]&=&\rho_{t}\Opa_{\phi}(F(\x))+\rho_{t}\Opa_{\phi}(G(\x)),\\
\rho_{t}[a\Opa_{\phi}(F(\x))]&=&a\rho_{t}\Opa_{\phi}(F(\x)).
\end{eqnarray*}
Lo afirmado sigue de lo anterior.
\end{proof}

\begin{theorem}
Fije $\phi$ en $\hurw_{R}^{k}[[\x]]$. Entonces
\begin{equation}
\rho_{t}\Opa_{\phi}(F(\x))=F(\rho_{t}\Opa_{\phi}(\x)).
\end{equation}
Este resultado es conocido como la serie de Lie-Groebner-Taylor.
\end{theorem}
\begin{proof}
Es f\'acil mostrar que
\begin{equation}
\rho_{t}\Opa_{\phi}(\textbf{1}x_{i}^{n_{i}})=[\textbf{1}x_{i}\circ\rho_{t}\Opa_{\phi}(\x)]^{n_{i}}
\end{equation}
en donde $\circ$ indica composici\'on. Luego en general
\begin{equation}
\rho_{t}\Opa_{\phi}(\textbf{1}x_{1}^{n_{1}}\cdots x_{k}^{n_{k}})=[\textbf{1}x_{1}\circ\rho_{t}\Opa_{\phi}(\x)]^{n_{1}}\cdots[\textbf{1}x_{k}\circ\rho_{t}\Opa_{\phi}(\x)]^{n_{k}}
\end{equation}
y
\begin{equation*}
\rho_{t}\Opa_{\phi}(F(\x))=F(\rho_{t}\Opa_{\phi}(\x)).
\end{equation*}
\end{proof}

Ahora tomemos $F(\x)=\x$ en la definici\'on del mapa $\rho_{t}$. Obtendremos el anillo isomorfo a $(\Opa_{T}(\x),\boxplus,\odot)$, con $T=\hurw_{R}^{k}[[\x]]$. Tenemos

\begin{definition}
Tome $\phi(\x)$ y $\psi(\x)$ de $\hurw_{R}^{k}[[\x]]$. Definimos la suma $\boxplus$ y el producto
$\odot$ en $\rho_{t}\Opa_{T}(\x)$ así
\begin{eqnarray}
\rho_{t}\Opa_{\phi(\x)}(\x)\boxplus\rho_{t}\Opa_{\psi(\x)}(\x)&=&\rho_{t}[\Opa_{\phi(\x)}(\x)\boxplus\Opa_{\psi(\x)}(\x)]\\
\rho_{t}\Opa_{\phi(\x)}(\x)\odot\rho_{t}\Opa_{\psi(\x)}(\x)&=&\rho_{t}[\Opa_{\phi(\x)}(\x)\odot\Opa_{\psi(\x)}(\x)]
\end{eqnarray}
\end{definition}

Luego tenemos el siguiente resultado

\begin{theorem}
El conjunto $\rho_{t}\Opa_{T}(\x)$ con la suma $\boxplus$ y el producto $\odot$ es un anillo conmutativo con unidades $\rho_{t}\Opa_{\textbf{0}}(\x)=\x$ y $\rho_{t}\Opa_{\textbf{1}}(\x)=\x+\textbf{1}t$. El anillo $\rho_{t}\Opa_{T}(\x)$ ser\'a llamado \textbf{anillo de flujos k-dimensionales}.
\end{theorem}
\begin{proof}
Por la definición arriba el conjunto $\rho_{t}\Opa_{T}(\x)$ hereda las propiedades de 
anillo de $(\Opa_{T}(\x),\boxplus,\odot)$. Por otro lado, $\rho_{t}\Opa_{\textbf{0}}(\x)=\rho_{t}\textbf{0}_{\Opa}=\x$ y $\rho_{t}\Opa_{\textbf{1}}(\x)=\rho_{t}\textbf{1}_{\Opa}=\x+\textbf{1}t$.
\end{proof}

La raz\'on para construir el anillo $\rho_{t}\Opa_{T}(\x)$ es porque este contiene todas 
las soluciones de los sistemas din\'amicos $\delta_{t}\Phi=F(\Phi)$ para cada funci\'on $F(\x)$ en
$\hurw_{R}^{k}[[\x]]$. En este anillo es posible descomponer las soluciones de sistemas din\'amicos 
$k$-dimensionales en soluciones m\'as simples.\\ 
Primero suponga que $F(\x)=F_{1}(\x)+F_{2}(\x)+\cdots+F_{n}(\x)$. Deseamos resolver la ecuaci\'on diferencial $\delta_{t}\Phi=F(\Phi)$. El flujo de esta ecuaci\'on viene a ser
\begin{eqnarray*}
\Phi(t,\x,F(\x))&=&\rho_{t}\Opa_{F(\x)}(\x)\\
&=&\rho_{t}\Opa_{F_{1}(\x)+\cdots+F_{n}(\x)}(\x)\\
&=&\rho_{t}\left(\bbox_{i=1}^{n}\Opa_{F_{i}(\x)}(\x)\right)\\
&=&\bbox_{i=1}^{n}\rho_{t}\Opa_{F_{i}(\x)}(\x)\\
&=&\bbox_{i=1}^{n}\Phi(t,\x,F_{i}(\x)).
\end{eqnarray*} 
Así el flujo de $\delta_{t}\Phi=F(\Phi)$ se descompone en sumandos en donde cada sumando es
el flujo de las ecuaciones $\delta_{t}\Phi=F_{i}(\Phi)$.

Ahora suponga que $F(\x)$ factoriza como
$F(\x)=F_{1}(\x)\cdot F_{2}(\x)\cdots F_{n}(\x)$ en $\hurw_{R}^{k}[[\x]]$ y busquemos la soluci\'on
a la ecuaci\'on $u^{\prime}=F(u)$. Entonces su flujo es
\begin{eqnarray*}
\Phi(t,\x,F(\x))&=&\rho_{t}\Opa_{F(\x)}(\x)\\
&=&\rho_{t}\Opa_{F_{1}(\x)\cdots F_{n}(\x)}(\x)\\
&=&\rho_{t}\left(\bcast_{i=1}^{n}\Opa_{F_{i}(\x)}(\x)\right)\\
&=&\bcast_{i=1}^{n}\rho_{t}\Opa_{F_{i}(\x)}(\x)\\
&=&\bcast_{i=1}^{n}\Phi(t,\x,F_{i}(\x)).
\end{eqnarray*}
en donde cada $\Phi(t,\x,F_{i}(\x))$ es el flujo de la ecuaci\'on $u^{\prime}=F_{i}(u)$.

\begin{example}
\textbf{Sistema de Ecuaciones Diferenciales Lineales.} Un sistema de ecuaciones diferenciales lineal es uno de la forma
\begin{equation}\label{eqn_lineal}
    \x^{\prime}=A\x
\end{equation}
en donde $A=(a_{ij})$ es una matriz $n\times n$ con entradas en $\R$ y $\x=(x_{1},...,x_{n})^{\top}\in\R^{n}$. Por la ecuación (\ref{eqn_oper_auto})
\begin{eqnarray*}
\Opa_{A\x}(\x)=(\x,A\x,A^{2}\x,A^{3}\x,...)
\end{eqnarray*}
y el flujo de (\ref{eqn_lineal}) es
\begin{eqnarray*}
\Phi(t,\x,A\x)&=&\x+\sum_{n=1}^{\infty}A^{n}\x\frac{t^{n}}{n!}\\
&=&\left(1+\sum_{n=1}^{\infty}A^{n}\frac{t^{n}}{n!}\right)\x\\
&=&e^{At}\x
\end{eqnarray*}
siendo este resultado compatible con la solución ya conocida de (\ref{eqn_lineal}).
\end{example}

\begin{theorem}
Sea $F(\x)$ la función
\begin{eqnarray}
F(\x)=
\left(
\begin{array}{c}
    \prod_{j=1}^{m}\left(\sum_{i=1}^{n}a_{1,i,j}x_{i}+b_{1,j}\right)\\
    \prod_{j=1}^{m}\left(\sum_{i=1}^{n}a_{2,i,j}x_{i}+b_{2,j}\right)\\
    \vdots\\
    \prod_{j=1}^{m}\left(\sum_{i=1}^{n}a_{n,i,j}x_{i}+b_{n,j}\right)
\end{array}
\right)
\end{eqnarray}
con los $a_{l,i,j}\in R$, entonces la ecuación $\x^{\prime}=F(\x)$ tiene solución
\begin{equation}
    \Phi(t,\x,F(\x))=\bcast_{j=1}^{n}\left(e^{tA_{j}}\x+\int_{0}^{t} e^{(t-s)A_{j}}\textbf{b}_{j}ds\right).
\end{equation}
\end{theorem}
\begin{proof}
Sea $F_{j}(\x)=(\sum_{i=1}^{n}a_{1,i,j}x_{i},...,\sum_{i=1}^{n}a_{n,i,j}x_{i})^{\top}+(b_{1,j},...,b_{n,j})$. Entonces $F(\x)=F_{1}(\x)\cdot F_{2}(\x)\cdots F_{m}(\x)$. Además como $F_{j}(\x)=A_{j}\x+\textbf{b}_{j}$, con $A_{j}=(a_{l,i,j})_{l,i=1}^{n}$ y $\textbf{b}_{j}=(b_{1,j},...,b_{n,j})$,
entonces resolviendo el sistema lineal no homogéneo $\x^{\prime}=A_{j}\x+\textbf{b}_{j}$ y luego multiplicando en el anillo $\rho_{t}\Opa_{T}(\x)$, con $T=\hurw_{R}^{n}[[\x]]$, obtenemos el resultado
deseado.
\end{proof}

Este teorema es muy útil para encontrar la solución exacta del sistema planar
\begin{eqnarray*}
x^{\prime}&=&p(x,y)\\
y^{\prime}&=&q(x,y)
\end{eqnarray*}
con $p(x,y), q(x,y)$ polinomios irreducibles en $\R[x,y]$ el anillo de polinomios reales en $x,y$.

\begin{example}
\textbf{Ecuación Lotka-Volterra.} Esta es una ecuación de la forma
\begin{eqnarray*}
x^{\prime}&=&x(a-bx-cy)\\
y^{\prime}&=&y(d-ex-fy)
\end{eqnarray*}
En notación vectorial esta ecuación se escribe como
\begin{eqnarray}\label{eqn_LV_vec}
\frac{d}{dt}
\left(
\begin{array}{c}
     x \\
     y 
\end{array}
\right)=
\left(
\begin{array}{c}
     x \\
     y 
\end{array}
\right)\cdot
\left(
\begin{array}{c}
     a-bx-cy \\
     d-ex-fy 
\end{array}
\right)
\end{eqnarray}
Deseamos encontrar una solución en el anillo $\rho_{t}\Opa_{T}(x,y)$ con $T=H_{\R}^{2}[[x,y]]$. La solución de (\ref{eqn_LV_vec}) la encontraremos por
encontrar las soluciones de las ecuaciones
\begin{equation*}
    \frac{d}{dt}\left(
    \begin{array}{c}
         x\\
         y
    \end{array}
    \right)=
    \left(
    \begin{array}{c}
         x\\
         y
    \end{array}
    \right),\ \ \ 
    \frac{d}{dt}\left(
    \begin{array}{c}
         x\\
         y
    \end{array}
    \right)=
    \left(
    \begin{array}{c}
         a-bx-cy\\
         d-ex-fy
    \end{array}
    \right)
\end{equation*}
y juntar todo en $\rho_{t}\Opa_{T}(x,y)$. Entonces la solución de la ecuación (\ref{eqn_LV_vec}) es
\begin{equation}\label{eqn_sol_LV}
    \left(
    \begin{array}{c}
         x(t) \\
         y(t) 
    \end{array}
    \right)=
    e^{tI}
    \left(
    \begin{array}{c}
         x_{0} \\
         y_{0} 
    \end{array}
    \right)\odot
    \left(
    e^{-tB}
    \left(
    \begin{array}{c}
         x_{0} \\
         y_{0} 
    \end{array}
    \right)
    +\int_{0}^{t}e^{-(t-s)B}\textbf{b}ds
    \right)
\end{equation}
en donde $I=\left(\begin{array}{cc}
     1&0  \\
     0&1 
\end{array}\right)$ es la matriz identidad,
$B=\left(\begin{array}{cc}
     b&c  \\
     e&f 
\end{array}\right)$ y $\textbf{b}=(a,t)^{\top}$. Finalmente resolviendo el segundo factor de (\ref{eqn_sol_LV}) por cualquier método tradicional obtenemos
\begin{equation}\label{eqn_sol_final_LV}
    \left(
    \begin{array}{c}
         x(t) \\
         y(t) 
    \end{array}
    \right)=
    e^{tI}
    \left(
    \begin{array}{c}
         x_{0} \\
         y_{0} 
    \end{array}
    \right)\odot
    \left(
    e^{-tB}
    \left(
    \begin{array}{c}
         x_{0} \\
         y_{0} 
    \end{array}
    \right)
    +
    \left(
    \begin{array}{c}
         u \\
         v 
    \end{array}
    \right)
    \right)
\end{equation}
en donde $u=\frac{af-cd}{bf-ce}$ y $v=\frac{bd-ae}{bf-ce}$ probado que
$bf-ce\neq0$. Cuando hacemos $t=0$ en (\ref{eqn_sol_final_LV})
\begin{equation*}
    \left(
    \begin{array}{c}
         x(0) \\
         y(0) 
    \end{array}
    \right)=
    \left(
    \begin{array}{c}
         x_{0} \\
         y_{0} 
    \end{array}
    \right)\odot
    \left(
    \left(
    \begin{array}{c}
         x_{0} \\
         y_{0} 
    \end{array}
    \right)
    +
    \left(
    \begin{array}{c}
         u \\
         v 
    \end{array}
    \right)
    \right)=
    \left(
    \begin{array}{c}
         x_{0} \\
         y_{0} 
    \end{array}
    \right)
\end{equation*}
ya que $(x_{0},y_{0})^{\top}$ es el elemento neutro en  $\rho_{t}\Opa_{T}(x,y)$ con $T=\hurw_{\R}^{2}[[x,y]]$.
\end{example}

\begin{example}
\textbf{Ecuación de Van der Pol}. Esta ecuaci\'on es de la forma
\begin{equation}
x^{\prime\prime}+\mu(x^{2}-1)x^{\prime}+x=0.
\end{equation}
En notaci\'on vectorial esta ecuaci\'on tiene la forma
\begin{equation}\label{eqn_van}
\left(
\begin{array}{c}
x\\
y
\end{array}
\right)^{\prime}
=\left(
\begin{array}{c}
y\\
-x-\mu(x^{2}-1)y
\end{array}
\right)
\end{equation}
Queremos encontrar una soluci\'on en el anillo $\rho_{t}\Opa_{T}(x,y)$ con $T=\hurw_{\R}^{2}[[x,y]]$. 
La ecuaci\'on (\ref{eqn_van}) se puede poner en la forma de factores
\begin{equation}
\left(
\begin{array}{c}
x\\
y
\end{array}
\right)^{\prime}
=
\left(
\begin{array}{c}
y\\
-x
\end{array}
\right)
+
\left(
\begin{array}{c}
0\\
-\mu
\end{array}
\right)
\cdot
\left(
\begin{array}{c}
0\\
x-1
\end{array}
\right)
\cdot
\left(
\begin{array}{c}
0\\
x+1
\end{array}
\right)
\cdot
\left(
\begin{array}{c}
0\\
y
\end{array}
\right)
\end{equation}
Luego debemos resolver las ecuaciones
\begin{equation*}
\left(
\begin{array}{c}
x\\
y
\end{array}
\right)^{\prime}
=
\left(
\begin{array}{c}
y\\
-x
\end{array}
\right)
,\ \ \
\left(
\begin{array}{c}
x\\
y
\end{array}
\right)^{\prime}
=
\left(
\begin{array}{c}
0\\
-\mu
\end{array}
\right)
,\ \ \
\left(
\begin{array}{c}
x\\
y
\end{array}
\right)^{\prime}
=
\left(
\begin{array}{c}
0\\
x-1
\end{array}
\right)
\end{equation*}
\begin{equation*}
\left(
\begin{array}{c}
x\\
y
\end{array}
\right)^{\prime}
=
\left(
\begin{array}{c}
0\\
x+1
\end{array}
\right)
,\ \ \
\left(
\begin{array}{c}
x\\
y
\end{array}
\right)^{\prime}
=
\left(
\begin{array}{c}
0\\
y
\end{array}
\right)
\end{equation*}
y juntar todo en el anillo $\rho_{t}\Opa_{T}(x,y)$.
Entonces la soluci\'on de la ecuaci\'on de Van der Pol es
\begin{eqnarray}\label{eqn_sol_van}
\left(
\begin{array}{c}
x(t)\\
y(t)
\end{array}
\right)
&=&
\left(
\begin{array}{c}
x_{0}\cos t-y_{0}\sin t\\
x_{0}\sin t+y_{0}\cos t
\end{array}
\right)
\boxplus
\left(
\begin{array}{c}
x_{0}\\
-\mu t+y_{0}
\end{array}
\right)
\odot
\left(
\begin{array}{c}
x_{0}\\
(x_{0}-1)t+y_{0}
\end{array}
\right)\nonumber\\
&&\odot
\left(
\begin{array}{c}
x_{0}\\
(x_{0}+1)t+y_{0}
\end{array}
\right)
\odot
\left(
\begin{array}{c}
x_{0}\\
y_{0}e^{t}
\end{array}
\right)
\end{eqnarray}
para todo $x_{0},y_{0}\in\R$. Cuando hacemos $t=0$ en (\ref{eqn_sol_van}) obtenemos
\begin{eqnarray}
\left(
\begin{array}{c}
x(0)\\
y(0)
\end{array}
\right)
&=&
\left(
\begin{array}{c}
x_{0}\\
y_{0}
\end{array}
\right)
\boxplus
\left(
\begin{array}{c}
x_{0}\\
y_{0}
\end{array}
\right)
\odot
\left(
\begin{array}{c}
x_{0}\\
y_{0}
\end{array}
\right)\nonumber\\
&&\odot
\left(
\begin{array}{c}
x_{0}\\
y_{0}
\end{array}
\right)
\odot
\left(
\begin{array}{c}
x_{0}\\
y_{0}
\end{array}
\right)\\
&=&
\left(
\begin{array}{c}
x_{0}\\
y_{0}
\end{array}
\right)\boxplus
\left(
\begin{array}{c}
x_{0}\\
y_{0}
\end{array}
\right)=\left(
\begin{array}{c}
x_{0}\\
y_{0}
\end{array}
\right)
\end{eqnarray}
en donde tuvimos presente que $(x_{0},y_{0})$ es el elemento neutro en 
$\rho_{t}\Opa_{T}(x,y)$.
Cuando $\mu=0$ en (\ref{eqn_sol_van}) obtenemos la soluci\'on
\begin{equation}
\left(
\begin{array}{c}
x(t)\\
y(t)
\end{array}
\right)=
\left(
\begin{array}{c}
x_{0}\cos t-y_{0}\sin t\\
x_{0}\sin t+y_{0}\cos t
\end{array}
\right)
\end{equation}
de la ecuaci\'on $x^{\prime\prime}+x=0$.

\end{example}

Finalizamos este trabajo mostrando como se relacionan la composición $\circ$ con las operaciones $\boxplus$ y $\odot$
\begin{theorem}
En el anillo $\rho_{t}\Opa_{T}(\x)$ tenemos las siguientes identidades relacionando las operaciones $\circ$, $\boxplus$ y $\odot$
\begin{enumerate}
\item $\Phi_{t}(\x,F(\x)+G(\x))\circ\Phi_{s}(\x,F(\x)+G(\x))=\Phi_{t+s}(\x,F(\x))\boxplus\Phi_{t+s}(\x,G(\x))$.
\item $\Phi_{t}(\x,F(\x)\cdot G(\x))\circ\Phi_{s}(\x,F(\x)\cdot G(\x))=\Phi_{t+s}(\x,F(\x))\odot\Phi_{t+s}(\x,G(\x))$
\end{enumerate} 
\end{theorem}
\begin{proof}
La prueba es directa.
\end{proof}

\end{document}